\newcommand{\as}{\mbox{a.s.}}
\newcommand{\beq}{\begin{eqnarray*}}
\newcommand{\feq}{\end{eqnarray*}}
\newcommand{\beqn}{\begin{eqnarray}}
\newcommand{\feqn}{\end{eqnarray}}
\newcommand{\ch}[1]{\mbox {\bf 1}_{\{#1\}}}
\newcommand{\Z}{{\mathbb {Z}}}
\newcommand{\N}{{\mathbb {N}}}
\newcommand{\hr}[1]{\newline\href{#1}{\url{#1}}}
\newtheorem{theorem}{Theorem}
\newtheorem{lem}{Lemma}
\newtheorem*{remark*}{Remark}
\newcommand{\veps}{\varepsilon}
\title{On a species survival model}
\author{
Iddo Ben-Ari\footnote{Department of Mathematics, University of Connecticut, Storrs, CT 06269, USA; benari@math.uconn.edu}
\and
Anastasios Matzavinos\footnote{Department of Mathematics, Iowa State University, Ames, IA 50011, USA; tasos@iastate.edu}
\and
Alexander Roitershtein\footnote{Department of Mathematics, Iowa State University, Ames, IA 50011, USA; roiterst@iastate.edu}
}
\begin{document}
\maketitle
\abstract{In this paper we provide some sharp asymptotic results for
a stochastic model of species survival recently proposed by Guiol, Marchado, and  Schinazi.}
\section{Introduction and statement of results}
Recently, Guiol, Marchado, and  Schinazi \cite{GMS} proposed a new mathematical framework for modeling
species survival which is closely related to the discrete Bak-Sneppen evolution model. In the original  Bak-Sneppen model \cite{BSM}
a finite number  of species are arranged in a circle, each species being characterized by
its location and a parameter representing the  \emph{fitness} of the species and taking values between zero and one. The number
of species and their location on the circle are fixed and remain unchanged throughout the evolution of the system.
At discrete times $n=0,1,\ldots,$ the species with the
lowest fitness and its two immediate neighbors  update simultaneously  their fitness values at random. The Bak-Sneppen evolution model is
often referred to as an ``ecosystem" because of the local interaction between different species. The distinguishing feature of the
model is the emergence of self-organized criticality \cite{BAKB, evolution, GMW,jensen}  regardless the simplicity and
robustness of the underlying evolution mechanism. The Bak-Sneppen model has attracted significant attention over the past few decades,
but it has also been proven to be difficult for analytical study. See for instance \cite{GMW} for a relatively recent survey of the model.
\par
The asymptotic behavior of the Bak-Sneppen model, as the number of species gets arbitrarily large, was conjectured on the basis of computer
simulations in \cite{BSM}. It appears that the distribution of the fitness is asymptotically uniform on an interval $(f_{\mbox{crit}}, 1)$ for some
critical parameter $f_{\mbox{crit}}$, the value of which is close to $2/3$ \cite{BAKB, jensen}.
\par
Guiol, Marchado, and  Schinazi \cite{GMS} were able to prove a similar
result for a related model with a stochastically growing number of species.
Their analysis is based on a reduction to the study of a certain random walk,
which allows them to build a proof using well-known results
from the theory of random walks. The main result of \cite{GMS} is thus based on
general properties of Markov chains, and  suitable variations of the result can in
principle be carried out to other similar models.
\par
In this paper we  focus on the model introduced in \cite{GMS} as is. Our aim
is to elucidate the underlying mechanism responsible for the phenomenon described in
\cite{GMS} by sharpening the estimates that lead to the major qualitative statement therein.
We proceed with a description of the Guiol, Marchado, and  Schinazi (GMS) model.
\par
In contrast to the Bak-Sneppen model, the number of species in the GMS model is random and changes in time,
and only the species with the lowest fitness is randomly replaced.
The local interaction between species is not considered in the GMS model, and therefore the spatial structure of the population
is of no importance. Let $p>\frac 12$ be given and denote $q=1-p.$ Let $\Z_+$ denote the set of non-negative integers and let
$X=(X_n:n\in \Z_+)$ be a discrete-time birth and death process with the following transition probabilities: from each site, $X_n$ increases by $1$ with probability $p$; from each site different than $0$, $X_n$ decreases by $1$ with probability $q=(1-p)$; finally, at $0$, $X_n$ stays put with probability $q.$  Thus $X$ is a
nearest-neighbor transient random walk on the integer lattice $\Z_+$ with holding times and reflection barrier at zero. Throughout the paper we assume that
$X_0=0$ with probability one.
\par
The model describes the evolution of a population of species: newborn species  are given "equal opportunity" by assigning them a random fitness, and the least fit species are the first to die.  A jump to the right represents birth of a new species, whereas a jump to the left represents death of an existing species. Thus $X_n$ represents the number of {\it species alive} at time $n.$ When a new species is born,  it is assigned a {\it fitness}. The fitness is a uniform $[0,1]$ random variable independent on the fitness of all previously born species as well as of the path of the process $X.$ When $X$ jumps to the left, the species with the lowest fitness is eliminated. We remark
that, in a different context,  a similar model was considered by Liggett and Schinazi in \cite{LS09}.
\par
Fix $f \in (0,1)$. We examine the model by considering two coupled random processes, $L=(L_n:n\in \Z_+)$ (for {\it lower or left})
and $R=(R_n:n\in \Z_+)$ (respectively, for {\it right}),  where $L_n$ denotes the number of species alive at time $n$ whose fitness is less than $f$
while $R_n$ denotes the number of the remaining species alive at time $n$.
\par
Observe that  $L_n$ increases by $1$ if $X_n$ does and the newborn species has fitness less than $f,$ and $L_n$ decreases by $1$ whenever $X_n$ decreases by one and $L_n$ is not zero. The value of $L_n$ remains unchanged when either $X_n$ increases by $1$ and the newborn species has fitness at least $f$ or $X_n$
decreases by $1$ and $L_n=0.$
\par
Notice that when it is not at zero, the process $L$  evolves as a nearest-neighbor random walk with probability $pf$ of jumping to the right, probability $q$ of going
to the left, and probability $1-pf -q $ of staying put. When at zero, $L_n$ jumps to the right with probability  $pf$, and stays put with
the complementary probability $1-pf.$ Thus $L$ is a Markov chain. Since $p>q,$ it is positive recurrent if $pf < q,$ null-recurrent if $pf =q,$ and is otherwise transient. In what follows we will denote the critical value $q/p$ of the parameter $f$ by $f_c.$    
\par
Consider the process $B=(B_n:n\in \Z_+),$ where $B_n$ is the total number of species born by time $n$ with fitness at least $f$. Observe that $B$ is a non-decreasing Markov chain (formed by sums of i.i.d. Bernulli variables), which jumps one step up with probabilities $p(1-f)=p-q$ and stays put with the complementary probability $2q.$ It is shown in \cite{GMS} that when $f=f_c,$ we have that for any $\veps >0$
\beqn
\label{icor}
0 \le \limsup\limits_{n\to\infty} \frac{B_n - R_n}{ n^{1/2+\veps} } \le \frac{2}{q}, \qquad \as,
\feqn
while each species with fitness less than the critical value disappears after a finite (random) time. It follows from \eqref{icor} that 
the distribution of species with fitness higher than $f_c$ approaches a uniform law. 
\par
We sharpen the above result by proving the following theorem. Recall that $f_c=q/p.$ 
\begin{theorem}
\label{th:limit}
Suppose that $f=f_c.$ Then
\begin{enumerate}
\item
$\displaystyle \limsup_{n\to\infty} \frac{B_n -R_n}{\sqrt{4q n \ln \ln n}} = 1,\qquad \mbox{\rm a.s.}$
\item
$\displaystyle  \frac{B_n -R_n}{\sqrt{2qn}} \Rightarrow|N(0,1)|,$ where $N(0,1)$ denotes
a mean-zero Gaussian random variable with variance one, and $ \Rightarrow$ stands for convergence in distribution.
\item $(L_n:n\in \N)$ is a recurrent Markov chain (visiting zero infinitely often) and hence each species with fitness less than the critical 
value becomes eventually extinct with probability one.
\end{enumerate}
\end{theorem}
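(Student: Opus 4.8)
The plan is to reduce all three parts to classical facts about one-dimensional random walks, once we observe that $D_n:=B_n-R_n$ is exactly the Skorokhod reflection term (the boundary ``local time'') of the reflected walk $L$. First I would record the combinatorial content of $D_n$: it is the number of species of fitness at least $f_c$ that have died by time $n$, and such a death happens at a step precisely when $X$ jumps left while $L=0$, since when $L>0$ the eliminated species has fitness below $f_c$. Let $c_k\in[0,1]$ be the fitness value available at step $k$, and set $g_k:=\ch{X\text{ jumps right at step }k,\ c_k<f_c}$ and $h_k:=\ch{X\text{ jumps left at step }k}$. Then the evolution of $L$ is the Lindley recursion $L_{k+1}=\max(L_k+g_k-h_k,0)$, while $D_n$ increases by one exactly at the steps with $L_k=0$ and $h_k=1$. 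A one-line induction on $n$ then gives, with $\phi_n:=\sum_{k<n}(g_k-h_k)$, the discrete Skorokhod identity
\[
L_n=\phi_n-\min_{0\le m\le n}\phi_m ,\qquad D_n=-\min_{0\le m\le n}\phi_m=\max_{0\le m\le n}(-\phi_m).
\]

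Next I would remove the effect of the reflecting barrier of $X$ at $0$. Driving $X$ by i.i.d.\ uniforms $U_k$ (so that $X$ jumps right at step $k$ iff $U_k<p$), the sequence $g_k=\ch{U_k<p,\ c_k<f_c}$ is i.i.d.\ Bernoulli with parameter $pf_c=q$, and $h_k$ coincides with the i.i.d.\ Bernoulli$(q)$ sequence $\tilde h_k:=\ch{U_k\ge p}$ at every step except the a.s.\ finitely many steps with $X_k=0$ (where $h_k=0\le\tilde h_k$), using the transience of $X$. Hence $\phi_n=\tilde\phi_n+C_n$, where $\tilde\phi_n:=\sum_{k<n}(g_k-\tilde h_k)$ is a random walk with i.i.d.\ increments of mean zero and variance $\mathrm{Var}(g_1-\tilde h_1)=2q$, and $0\le C_n\uparrow C_\infty<\infty$ almost surely. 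Writing $\tilde M_n:=\max_{0\le m\le n}(-\tilde\phi_m)$, I get $\tilde M_n-C_\infty\le D_n\le\tilde M_n$, so $D_n=\tilde M_n+O(1)$ almost surely.

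I would then derive parts (1) and (2) by applying standard theorems to the mean-zero walk $-\tilde\phi$. For (1): the Hartman--Wintner law of the iterated logarithm gives $\limsup_n(-\tilde\phi_n)/\sqrt{4qn\ln\ln n}=1$ a.s., and since $-\tilde\phi_n\le\tilde M_n$ while also $\tilde M_n\le\max(\text{const},(1+\veps)\sqrt{4qn\ln\ln n})$ for all large $n$ (using the LIL upper bound for $-\tilde\phi$ and the eventual monotonicity of $n\mapsto\sqrt{4qn\ln\ln n}$), one deduces $\limsup_n\tilde M_n/\sqrt{4qn\ln\ln n}=1$ a.s.; then $D_n=\tilde M_n+O(1)$ yields (1). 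For (2): Donsker's invariance principle gives $\bigl(-\tilde\phi_{\lfloor nt\rfloor}/\sqrt{2qn}\bigr)_{t\in[0,1]}\Rightarrow(W_t)_{t\in[0,1]}$ for a standard Brownian motion $W$; the running-maximum functional is continuous, so $\tilde M_n/\sqrt{2qn}\Rightarrow\max_{0\le t\le1}W_t$, which by the reflection principle has the law of $|W_1|=|N(0,1)|$, and $D_n=\tilde M_n+O(1)$ gives (2). Part (3) is immediate from the classification recalled in the introduction: $f_c=q/p$ gives $pf_c=q$, so $L$ is an irreducible, null-recurrent, hence recurrent, Markov chain on $\Z_+$, and in particular it visits $0$ infinitely often a.s.; since no species of fitness below $f_c$ is alive whenever $L=0$, each such species is extinct by the first return of $L$ to $0$ after its birth.

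The main obstacle --- indeed essentially the only non-routine point --- is the first step, namely seeing that $B_n-R_n$ is precisely the Skorokhod reflection term of the walk $L$; after that, parts (1)--(3) are applications of the Hartman--Wintner LIL, of Donsker's theorem together with the Brownian reflection principle, and of the recurrence classification for nearest-neighbor birth-and-death chains, respectively. The only technical nuisance along the way is the additive $O(1)$ correction produced by the reflecting barrier of $X$ at the origin, which is harmless because $X$ is transient.
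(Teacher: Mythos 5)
Your argument is correct, and it takes a genuinely different route from the paper. You identify $B_n-R_n$ with the discrete Skorokhod reflection term of the Lindley recursion governing $L$, namely $D_n=-\min_{0\le m\le n}\phi_m$, and then replace $\phi$ by the bona-fide mean-zero i.i.d.\ walk $\tilde\phi$ at the cost of an a.s.\ bounded correction (coming from the finitely many visits of the transient $X$ to $0$). From that point the law of the iterated logarithm and the central limit theorem for $D_n$ drop out of the Hartman--Wintner LIL and Donsker's theorem applied to the running maximum $\tilde M_n$ of $-\tilde\phi$, together with the reflection principle identifying $\max_{[0,1]}W$ with $|N(0,1)|$. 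The paper instead first replaces $B_n-R_n$ by the occupation time $\eta_n=\#\{i<n: L_i=0, J_i=1\}$, decomposes the path of $L$ into i.i.d.\ excursions from $0$, reduces the excursion lengths to first-return times of a simple symmetric random walk by passing to the ``skeleton'' and resampling the geometric holding times, and then establishes a LIL for cumulative return times (its Lemma~1, the ``folk fact'') for part (1) and a Laplace-transform/stable-law computation for part (2). What your approach buys is a substantially shorter and more self-contained proof: the running-minimum representation replaces all of the excursion/skeleton machinery, Lemma~1 becomes unnecessary because you invoke the LIL directly, and the stable-law calculation is replaced by the elementary continuity of the sup functional under Donsker convergence. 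What the paper's route buys is a more hands-on description of how the first-passage times of the embedded simple random walk govern the fluctuations, which one might find more transparent as an ``explanation'' even if it is longer. Two small remarks on your write-up, neither affecting correctness: the increments $g_k$ and $\tilde h_k$ are negatively correlated (indeed $g_k\tilde h_k=0$), and it is precisely this correlation that makes $\mathrm{Var}(g_1-\tilde h_1)=2qp+2q^2=2q$ rather than $2qp$; and in the argument for the $\limsup$ upper bound in part (1) you are implicitly using that $m\mapsto\sqrt{4qm\ln\ln m}$ is eventually increasing, which you do mention --- it holds for $m\ge e^e$, so the step is fine.
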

The proof of the theorem is included in Section~\ref{proofs}.  Notice that by the law of large numbers $B_n\sim n(1-f_c),$ $\as,$ as $n\to \infty.$ Moreover, the fitness of any randomly chosen species from $B_n$ is distributed uniformly on $(f_c,1).$ Hence the theorem implies that
this uniform law on $(f_c,1)$ is the limiting distribution for the fitness of a randomly chosen species being alive at time $n$ and having fitness larger than $f_c.$
\section{Proof of Theorem~\ref{th:limit}}
\label{proofs}
There is no loss of generality assuming that $X$ is obtained recursively from an i.i.d. sequence of Bernoulli random variables $J=(J_n:n\in \Z_+)$ with
$P(J_n=1)=q,$ $P(J_n=0)=p,$ as follows: $X_0=0$ and for $n\in \Z_+$ we have
\beqn
\label{eq:Xdefn}
X_{n+1} =X_n +  (1-J_n) - J_n (1-s_n),\mbox{ where } s_n:= \ch{X_n=0}.
\feqn
Let $G=(G_n:n\in\Z_+)$ be a Markov chain formed by the pairs $G_n = (L_n,J_n).$ Figure \ref{fig:Gprob} illustrates the transition mechanism of
$G.$
\begin{figure}[t]
\includegraphics[width=\textwidth]{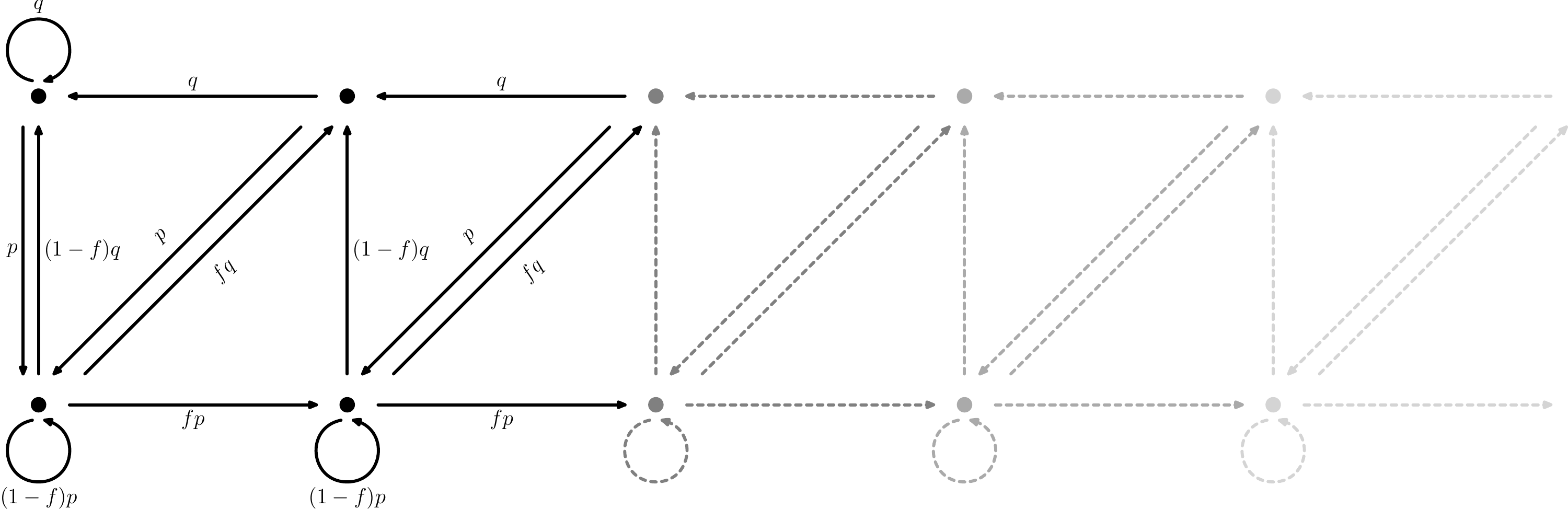}
\caption{Transition probabilities for $G$}
\label{fig:Gprob}
\end{figure}
\subsection{Reduction from $B_n-R_n$ to an occupation time of $G$}
Let $\Delta=(\Delta_n:n\in\Z_+)$ be the process defined through  $\Delta_n= B_n -R_n.$
Notice that $\Delta_n$ increases by $1$ if  and only if  $R_n$ decreases by $1,$ and otherwise stays put. That is
\beq
\Delta_{n+1}-\Delta_n = \ch{X_{n+1}-X_n=-1}  \ch{L_n=0} .
\feq
Thus by \eqref{eq:Xdefn}, $\Delta_{n+1} -\Delta_n = (1-s_n) J_n  \ch{L_n=0} $, and we have
\beq
\Delta_n = \sum_{i=0}^{n-1} (1-s_i) J_i \ch{L_i=0} = \eta_n - \sum_{i=0}^{n-1} J_i s_i,\mbox{ where }  \eta_n:= \sum_{i=0}^{n-1} J_i \ch{L_i=0}.
\feq
Observe that $\eta_n$ is the occupation time (number of visits) of $G$ at $(0,1)$ up to time $n-1$ and that $\sum_{i=0}^{n-1} s_i J_i$
is bounded above by $\sum_{i=0}^\infty s_i.$ Since $X$ is transient and $X_0=0,$ the latter series can be represented
in the form $\sum_{i=0}^\infty s_i=\sum_{i=1}^g h_i,$
where $g$ is the total number of visits to zero (distributed geometrically, according to $P(g=k)=f_c^{k-1}(1-f_c),$ $k\in\N,$ 
see for instance \cite[p.~274]{durrett}) and $h_i$ are successive holding times of $X$
at zero (i.i.d. sequence of geometric random variables,
independent of $g,$ with $P(h_i=k)=q^{k-1}p,$ $k\in\N$). In particular,
\beq
E\Bigl(\sum_{i=0}^\infty s_i J_i\Bigr)\leq E(g)\cdot E(h_1)=\frac{1}{p(1-f_c)}=\frac{1}{2p-1}<\infty.
\feq
Since $L$ and consequently $G$ are recurrent (and thus
$\Delta_n$ is a non-decreasing sequence converging to $+\infty$ with probability one), it therefore suffices to show that Theorem~\ref{th:limit} holds with
$\Delta_n=B_n-R_n$ replaced by $\eta_n$ in its statement.
\paragraph{Excursion decomposition for the path of $L.$}
We decompose the path of $L$ into a sequence of successive excursions from $0,$
each one lasting until (but not including) the first time $L$ returns to $0$ from $1.$
Observe that once the Markov chain $L$ is at zero, it will stay put until $J_k=0$ and the fitness of the newborn particle
is less than $f_c.$ Therefore, the holding time of $L$ at zero during one excursion is a geometric random variable with parameter
$pf_c=q.$ Let $\tilde h_k$ be the i.i.d. sequence of holding times at zero during successive excursions of $L$ from zero.  That is,
$P(\tilde h_k=n)=p^{n-1}q,$ $n \in \N.$ In what follows we will use the notation $\mbox{Geom}(a)$ for the geometric distribution
with parameter $a$ (for instance, we could write $\tilde h_k \sim \mbox{Geom}(q)$). Let
$\sigma_k$ be the $k$-th excursion starting time, that is $\sigma_1=0$ and
\beq
\sigma_k=\inf\{n>\sigma_{k-1}: X_{n-1}=1, X_n=0\},\quad k=2,3,\ldots
\feq
Define
\beq
\mu_k=\sum_{i=\sigma_k}^{\sigma_{k+1}-1} J_i\ch{L_i=0}=\sum_{i=\sigma_k}^{\sigma_k+\tilde h_k-1} J_i\ch{L_i=0}.
\feq
Notice that $(\mu_k:k\in \N)$ is an i.i.d. sequence and
\beqn
\label{law-ln}
\sum_{k=1}^{N_n} \mu_k < \eta_n  \le \sum_{k=1}^{N_n+1} \mu_k .
\feqn
We now compute $\mu:=E(\mu_k)$ using the fact that the sequence of pairs $G_n=(J_n,L_n)$ forms
a Markov chain, the transition mechanism of which is illustrated in Fig.~\ref{fig:Gprob}.  The value of
$\mu$ is equal to the expected number of visits by this Markov chain to the state $(0,1)$ during the period of time
starting at the state $(0,1)$ with probability $q$ and at $(0,0)$ with probability $p,$
and lasting until $G$ leaves the set $\{(0,0), (0,1)\}.$ We thus have, using first step analysis,
\beq
\mu=q(1+\mu)+p(1-f_c)\mu+pf_c\cdot 0=q+\mu(1-pf_c)=q+\mu p.
\feq
Hence $\mu=1.$ Consequently, using \eqref{law-ln} and the law of large numbers, we obtain
\beqn
\label{eq:etan}
\eta_n \sim N_n ~\mbox{as}~n\to \infty,\quad \as
\feqn
\paragraph{Reduction to a simple random walk}
The duration of the $k$-th excursion of $L$ from zero is equal to the sum $\tilde h_k+\alpha_k$ of the holding time at zero $\tilde h_k$ distributed as
$\mbox{Geom}(q),$ and the time $\alpha_k$ that it takes for $L$ to return from 1 to zero. Notice that
\beq
\alpha_k=\sigma_k-(\sigma_{k-1}+\tilde h_k-1).
\feq
For $m\in\N$ let $V_m = \sum_{k=1}^m (\tilde h_k + \alpha_k)$ be the total duration of the first $m$ excursions of $L$ from $0.$
Then $N_k=\max\{m:V_m\le k\}.$ With each excursion we can associate a \emph{skeleton}, which is the path obtained from the excursion
by omitting all transitions from a state to itself, for all states. If we let $\tau_k$ denote the length of the skeleton, then due to the choice of $f_c$ it follows that $\tau_k$ has the same distribution as the time required for the simple symmetric random walk on $\Z$ to get back to $0$ starting from $0.$ Furthermore, the time spent by $L$ at each visit to a site is a geometric random variable, $\mbox{Geom}(2q)$ for sites different than $0$ and $\mbox{Geom}(q)$ for $0.$ Therefore, the length of a single excursion itself is a sum of one
$\mbox{Geom}(q)$ random variable plus a sum of $\tau_k-1$ independent
$\mbox{Geom}(2q)$ random variables. If we replace $\tilde h_k$ with a $\mbox{Geom}(2q)$, then the resulting modified ``excursion time" becomes
a sum of $\tau_k $ copies of a $\mbox{Geom}(2q)$ random variable. Let $V_m'$ denote the total length of the first $m$ excursions modified in this way.
Let $N_k'$ denote the number of such excursions occurred by time $k$, that is
$N_k'= \max\{m:V_m' \le k\}.$ Then $V_m$ stochastically dominates $V_m'$. By the law of large numbers,
\beqn
\label{eq:lln}
\lim_{m\to\infty}  \frac{ V_m - V_m'}{m}  = E (\tilde h_1) - E (h_1') = \frac{1}{2q},
\feqn
where $h_1'$ is a geometric random variable with parameter $2q.$  Letting $T_m = \sum_{k=1}^m \tau_k,$ we obtain
\beq
V_m' = \sum_{k=1}^{T_m} h_k'',
\feq
where $(h_k'':k\in\N)$ is an i.i.d. sequence of random variables, each one distributed as $\mbox{Geom}(2q).$ Thus, by the law of large numbers,
\beqn
\label{eq:Vl'}
V_m' \sim \frac{T_m}{2q}~\mbox{as}~m\to\infty, \quad \as
\feqn
Notice that $T_m$ is distributed the same as the total length of the first $m$ excursions from zero of a simple symmetric random walk.
\subsection{Completion of the proof: CLT and LIL for $\eta_n$}
\paragraph{LIL for $\eta_n$}
We need the following result. Although the claim is a ``folk fact", we give a short proof
for the sake of completeness.
\begin{lem}
\label{lem:T}
\beq
\liminf_{m\to\infty} \frac{T_m}{m^2 /(2 \ln \ln m)}= 1 \mbox{ \rm a.s. }
\feq
\end{lem}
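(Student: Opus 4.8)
The plan is to recognize $T_m$ as (distributed as) the hitting time of level $m$ by a simple symmetric random walk, and then invoke the classical other law of the iterated logarithm (the Chung--Erd\H{o}s--type LIL) for such hitting times. Concretely, let $(S_j:j\in\Z_+)$ be a simple symmetric random walk on $\Z$ with $S_0=0$, let $\rho_1<\rho_2<\cdots$ denote its successive return times to $0$, and set $\tau_k=\rho_k-\rho_{k-1}$ (with $\rho_0=0$), so that $T_m=\rho_m$ has exactly the law described at the end of the preceding subsection. Each $\tau_k$ is in the domain of attraction of a one-sided stable law of index $1/2$: indeed $P(\tau_1>n)\sim c\,n^{-1/2}$ as $n\to\infty$ for an explicit constant $c$ (here $c=\sqrt{2/\pi}$, since $P(\tau_1>2n)=P(S_1\cdots S_{2n}\ne 0$ before returning$)$ equals $\binom{2n}{n}2^{-2n}\sim(\pi n)^{-1/2}$). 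Hence $T_m/m^2$ converges in distribution to a stable$(1/2)$ law, and the question is the precise a.s.\ lower envelope of the partial sums $T_m$.

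The key step is therefore the classical result on the lower limit of sums of i.i.d.\ positive random variables in the domain of attraction of a stable law of index $\alpha\in(0,1)$: if $T_m=\sum_{k=1}^m\tau_k$ with $P(\tau_1>n)\sim c n^{-\alpha}$, then $\liminf_{m\to\infty} (\ln\ln m)^{(1-\alpha)/\alpha}\, T_m/m^{1/\alpha}$ equals a deterministic constant a.s. For $\alpha=1/2$ the normalization is $(\ln\ln m)\,T_m/m^{2}$, matching the statement. I would cite this (it is in Feller or can be derived from the Erd\H{o}s--Chung machinery; for the random-walk return-time special case it is a classical computation of the lower class of $\rho_m$). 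To pin the constant to exactly $1$, one checks that with $c=\sqrt{2/\pi}$ the general formula gives the claimed value; equivalently, since $T_m/m^2\Rightarrow Y$ where $Y$ is the stable$(1/2)$ limit with $E[e^{-\lambda Y}]=e^{-\sqrt{2\lambda}}$ (the hitting-time/L\'evy distribution), the relevant constant is governed by the left tail behavior $P(Y\le x)\asymp \exp(-1/(2x))$ as $x\downarrow0$, and a standard Borel--Cantelli argument along a geometric subsequence $m_j=\lfloor\theta^j\rfloor$ yields $\liminf = 1$.

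The concrete way I would carry this out, to keep the proof self-contained as the lemma's phrasing ("short proof for completeness") demands, is the two-sided Borel--Cantelli argument directly on $T_m=\rho_m$. For the upper bound (that $\liminf\le1$): fix $\veps>0$, set $x_m=(1-\veps)m^2/(2\ln\ln m)$, and show $\sum_j P(\rho_{m_j}\le x_{m_j})=\infty$ along $m_j=\lfloor\theta^j\rfloor$ using $P(\rho_m\le x)\ge P(\,m/\!\max_{k\le m}\tau_k\text{-type bound}\,)$ or more simply the local estimate $P(\rho_m\le x)\approx \exp(-\,m^2/(2x))$ coming from $P(S_x=0)\sim(\pi x/2)^{-1/2}$ and the reflection/ballot bounds; then a suitable independence-across-blocks version of Borel--Cantelli gives infinitely many successes. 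For the lower bound ($\liminf\ge1$): with $x_m=(1+\veps)m^2/(2\ln\ln m)$ show $\sum_j P(\rho_{m_{j+1}}\le x_{m_j})<\infty$, again from $P(\rho_m\le x)\le C\exp(-c\,m^2/x)$, and interpolate between the subsequence points using monotonicity of $m\mapsto\rho_m$ and $x\mapsto x_m$. The main obstacle is obtaining the sharp exponential estimate $P(\rho_m\le x)=\exp\bigl(-(1+o(1))\,m^2/(2x)\bigr)$ with the correct constant $1/2$ in the exponent — this is where the constant $1$ in the lemma comes from, and it requires either a precise large-deviation/local-CLT computation for $P(S_{2x}=0\text{ with a return structure})$ or a clean reduction to the known L\'evy left-tail $P(Y\le x)=\exp(-(1+o(1))/(2x))$; everything else is routine Borel--Cantelli bookkeeping along a geometric subsequence.
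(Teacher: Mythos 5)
Your route is valid in principle but genuinely different from the paper's, and it is considerably heavier. You propose to treat $T_m$ as a sum of i.i.d.\ return times in the domain of attraction of a one-sided stable$(1/2)$ law, then obtain the $\liminf$ via the Chung-type lower LIL machinery: a sharp left-tail estimate $P(T_m\le x)=\exp\bigl(-(1+o(1))\,m^2/(2x)\bigr)$, followed by a two-sided Borel--Cantelli argument along a geometric subsequence, with an independence-across-blocks device for the divergent part and monotone interpolation for the convergent part. That can be made to work, but you correctly flag the main obstacle yourself: pinning down the exact constant $1/2$ in the exponent of the left tail requires either a precise local-CLT/large-deviation computation or an appeal to the L\'evy left-tail asymptotics, and on top of that one must do nontrivial bookkeeping (the events along the subsequence are not independent, so one must decouple via fresh increments $\rho_{m_{j+1}}-\rho_{m_j}$). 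As written, the proposal is a plan with the hard step left open.

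The paper takes a much shorter and more elementary path that avoids all tail estimates and Borel--Cantelli entirely. It writes the $m$-th return time for $|S|$ as $1+Y_1$ in distribution, where $Y_k=\gamma_{k+1}-\gamma_k$ are the i.i.d.\ passage times between consecutive new levels of $S$, so that $T_m\overset{d}{=}m+\gamma_m$ with $\gamma_m$ the hitting time of level $m$. It then invokes the classical Hartman--Wintner LIL for $S$ itself: from $\limsup_n S_n/\phi(n)=1$ with $\phi(x)=\sqrt{2x\ln\ln x}$, restricting the $\limsup$ to the ladder times gives $\limsup_n n/\phi(\gamma_n)=1$, and since $\phi^{-1}(x)\sim x^2/(2\ln\ln x)$ this inverts to $\liminf_n \gamma_n/(n^2/(2\ln\ln n))=1$; adding the bounded term $n$ does not change the limit. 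In short, the paper trades the stable-law left-tail analysis you sketch for a single observation that the hitting-time process is the generalized inverse of the walk, which lets the ordinary LIL do all the work. Your approach buys a statement that generalizes to other heavy-tailed return-time sums not arising from a single random walk; the paper's buys a genuinely short and self-contained proof, which is what the lemma's preamble asks for. If you do want to complete your route, the cleanest way to get the sharp exponent is exactly the reduction to hitting times that the paper uses -- at which point the stable-tail detour becomes unnecessary.
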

\begin{proof}[Proof of Lemma \ref{lem:T}]
Let $S=(S_n:n\in\Z_+)$ denote the simple symmetric random walk on $\Z$. Let $\gamma_0=0$ and define inductively
$\gamma_{m+1}= \inf \{k> \gamma_m: S_k=m+1\}$. Let $Y_m= \gamma_{m+1}-\gamma_m$ with the usual convention that
the infimum over an empty set is $+\infty.$ Let $\phi(x) = \sqrt{2x \ln \ln x}$ for $x>0.$
By the law of iterated logarithm for $S,$
\beq
\limsup_{n\to\infty} \frac{S_n}{\phi(n) } = \limsup_{n\to\infty} \frac{S_{\gamma_n}}{\phi(\gamma_n)}=\limsup_{n\to\infty} \frac{n}{\phi(\gamma_n)}.
\feq
Since $\phi^{-1}(x)\sim x^2/(2 \ln \ln x)$ as $x \to\infty,$ we obtain
\beq
\liminf_{n\to\infty} \frac{\gamma_n}{n^2 / (2\ln \ln n)} =1.
\feq
Observe that $\gamma_n=\sum_{i=0}^{n-1} Y_i$. The distribution of $t_1$, the time to return to $0$ starting from $0$ for $S$, is equal to the distribution
of $t_1'$, the time to return to $0$ starting from $0$ for the reflected random walk $|S|.$ Since $t_1'$ is equal in distribution to $1+Y_1,$
\beq
\liminf_{m\to\infty} \frac{T_m}{m^2 /(2 \ln \ln m)}= \liminf_{n\to\infty}  \frac{\gamma_n+n}{n^2 / (2\ln \ln n)} =1,
\feq
completing the proof of the lemma.
\end{proof}
Using the lemma along with \eqref{eq:lln} and \eqref{eq:Vl'}, we obtain
\beq
\liminf_{m\to\infty} \frac{2q  V_m}{m^2 /(2\ln \ln m)}=\liminf_{m\to\infty} \frac{2q V_m'}{m^2/(2\ln \ln m)}=1, \quad \as
\feq
Consequently, since $N$ is the inverse of sequence of $V$, we obtain
\beq
\limsup_{k\to\infty} \frac{N_k}{\sqrt{4q k \ln \ln k}}=1, \quad \as
\feq
Combining this with \eqref{eq:etan} completes the proof of the law of iterated logarithm for $\eta_n.$
\paragraph{CLT for $\eta_n$}
We now turn to the proof of the central limit theorem. It is well known (see for instance \cite[p.~394]{durrett}) that
\beq
\lim_{m\to\infty} E \bigl(e^{-\theta T_m/m^2}\bigr) =E \bigl(e^{-\sqrt{2\theta}}\bigr),\qquad \theta \geq 0.
\feq
Therefore, it follows from \eqref{eq:lln} and \eqref{eq:Vl'} that
\beq
\lim_{m\to\infty} E \bigl(e^{-\theta V_m/m^2}\bigr) = \lim_{m\to\infty} E (e^{-\theta V_m'/m^2}\bigr) =
E \bigl(e^{-\sqrt{\theta/q}}\bigr).
\feq
The function $\theta \to e^{-c \sqrt{2  \theta}},$  $\theta \geq 0,$ is the Laplace transform of a positive stable law with index $1/2$
whose density function is given by (see for instance \cite[p.~395]{durrett})
\beq
\varphi_c(u) = \ch{u \geq 0} \frac{c e^{-c^2/2u} }{\sqrt{2\pi u^3}}.
\feq
We intend to use this formula with $c=\frac{1}{\sqrt{2q}}.$ Set $V_0=0$ and observe that for all $k\in \N,$
\beq
P(N_k \le u  ) = P_0 (V_{\lfloor u\rfloor} \ge  k).
\feq
Fix $s>0$ and let $u=\sqrt{k}s$. Then
\beq
P(N_k \le \sqrt{k}s) =P(V_{\lfloor \sqrt{k}s \rfloor}\ge k ) =
P\Bigl (\frac{V_{\lfloor \sqrt{k}s \rfloor}}{(\lfloor \sqrt{k} s\rfloor)^2} \ge \frac{1}{s^2}(1+o(1)) \Bigr) \underset{k\to\infty}{\to}
\int_{s^{-2}}^\infty \varphi_c (u) du.
\feq
Differentiating the right-hand side we obtain
\beq
\lim_{k\to\infty}  P(N_k \le \sqrt{k}s) =  \int_0^s \frac{2 e^{- u^2/(2c^{-2})}}{\sqrt{2\pi c^{-2}}}du.
\feq
Therefore $N_k /\sqrt{k}$ converges weakly to the absolute value of a centered normal random variable with variance equal to $c^{-2} = 2q$. Combining this with \eqref{eq:etan} completes the proof of the central limit theorem for $\eta_n.$ \qed

\end{document}